\newtheorem{theorem}{Theorem}
\newtheorem{lemma}[theorem]{Lemma}
\newtheorem{corollary}[theorem]{Corollary}
 \newcommand{\ovd}[1]{\overrightarrow{#1}}
\begin{document}

\title{Chen and Chvátal's Conjecture in tournaments\footnote{
Support by Basal program AFB170001 and Fondeyct 1180994, Conicyt, {PAPIIT-M\'exico  IN107218 and IN106318, CONACyT-M\'exico 282280 and UNAM-CIC `La Conjetura de Chen-Chv\'atal en Gr\'aficas Dirigidas'.}}}
\author{Gabriela Araujo-Pardo \\
Instituto de Matematicas, \\
Universidad Nacional Aut\'onoma de M\'exico, M\'exico
\\
and
\\
Martín Matamala \\
Depto. de Ingeniería Matemática and \\
Centro de Modelamiento Matemático,\\ 
CNRS-UMI 2807, Universidad de Chile, Chile}

\maketitle

\begin{abstract}
In this work we present a version of the so called
Chen and Chvátal's conjecture for directed graphs.
A line of a directed graph $D$ is 
defined by an ordered pair $(u,v)$, with $u$ and $v$ two distinct vertices
of $D$, as the set of all vertices $w$ such that
$u,v,w$ belong to a shortest directed path in $D$
containing a shortest directed path from $u$ to $v$.

A line is empty if there is no directed path 
from $u$ to $v$. Another option is that a line 
is the set of all vertices. 
The version of the Chen and Chvátal's conjecture we study 
states that if none of previous options hold,
then the number of distinct lines in $D$ is at least its number of vertices.

Our main result is that any tournament satisfies this conjecture as well as
any orientation of a complete bipartite graph of diameter three.
\end{abstract}



\section{Introduction}

The Chen and Chvátal's conjecture introduced in \cite{CC} 
expresses a trade-off 
that may occur in metric spaces similar to the 
well-known result saying that 
every set of $n$ points in the Euclidean plane determines at least $n$ distinct lines unless they are in the same line (see \cite{dbe}).

Let $(V,d)$ be a metric space. Given two distinct points $x,y\in V$,
we say that a point $z\in V$ is between $x$ and $y$ when
$d(x,y)=d(x,z)+d(z,y)$. We denote the set of all points between $x$ and $y$ by $[x,y]$.

The line generated by two distinct points $x,y\in V$ is the
following set of points
\begin{equation}\label{e:deflinegen}
\overline{xy}:=\{z\in V: x\in [z,y], y \in [x,z] {\rm \  or\  } z\in [x,y]\}.
\end{equation}

Chen and Chvátal conjectured that in any metric space
with $n$ points, $n\geq 2$, either the entire space is a line or there are at least $n$
distinct lines (\cite{CC}). In \cite{metricSpace}, it was proved that 
a metric space with $n$ points without universal lines contains $\Omega(\sqrt n)$ different lines.

For metric spaces in which all distances are integral and are at most $k$, 
previous bound can be improved to $\Omega( n/(5k))$, for each $k\geq 3$. 
In \cite{ChCh11, Chvatal2} it was proved that the conjecture holds
for metric spaces with distances $0,1$ or $2$.

A family of metric spaces with integer distances arises from 
metric space induced by graphs. Here the points are the vertices of the graph and the distance between two vertices is defined by the length of a shortest path between them.
In~\cite{AK} and~\cite{BBCCCCFZ}, it was proved that Chen and Chvátal's 
Conjecture holds for metric spaces induced by chordal graphs and for distance-hereditary graphs, respectively. 
The previous results were extended in \cite{AMRZ} to any graph $G$ such that every induced subgraph of $G$ is either a chordal
graph, has a cut-vertex or a non-trivial module.

The purpose of this work is to present a version of this conjecture for directed graphs, hereinafter referred to as digraphs, and to prove that it holds for tournaments 
and for orientations of complete bipartite graphs with oriented diameter at most three.

For digraphs the function defined by shortest dipaths
does not necessarily defines a distance. However, 
for directed graph we can still denote by $[x,y]$ the set of all vertices $z$ such that there is a shortest dipath from $x$ to $y$ containing $z$. Then, Equation \ref{e:deflinegen}
still defines the line generated by two distinct vertices $x$ and $y$ in a digraph.

Here we emphasize that our definition of line inherites 
the oriented character of digraphs: it is not true
in general that $\ovd{xy}=\ovd{yx}$. Moreover, 
contrary to what happens for metric space where a line $\overline{xy}$ always contains $x$ and $y$, and then it is not empty, in a directed graph, if there is no (shortest) dipaths from $x$ to $y$, then $\ovd{xy}$ is empty. Since, this latter situation happens if and only if the digraph is not strongly connected 
we restrict our study to this latter class of digraphs.

In fact, when considering strongly connected digraphs instead of graphs, we are moving from the context of finite metric spaces to that of finite \emph{quasimetric} spaces where symmetry is not required.
Though in this work we focus on directed graphs, it is worth to mention that one can define lines in any quasimetric space $(V,d)$, simply by (re)defining the set $[u,v]$ as the set of all points satisfying $d(u,v)=d(u,z)+d(z,v)$. In this context, an analogous to Chen and Chvátal's conjecture has the same form as the original one: if no line is universal, then there are at least as many lines as vertices. Most results obtained so far for finite metric spaces in the context of Chen and Chvátal conjecture rely heavily in the symmetry of the distance function giving little hope to
extend them easily to the quasimetric framework.
One exception could be the lower bound $\Omega(\sqrt{n})$ proved in  \cite{metricSpace}, Theorem 3.1. Its proof does not use 
the symmetry of the distance function but use Lemma 2.2 which does use it. However, the statement of Lemma 2.2 translate to a meaningful question for a quasimetric space $(V,d)$ without universal line: given $t$ points $x_1,\ldots,x_t\in V$ such that $x_i\in [x_{i-1},x_{i+1}]$, for each $i\in \{2,\ldots,t-1\}$, is it true that $(V,d)$ has at least $t$ different lines?

Chen and Chvátal's conjecture is trivial for bipartite graphs, simple for complete
graphs and it has been solved asymptotically, 
for graphs with bounded diameter. It has also been solved
for graphs of diameter at most two but requiring much
more effort. To the best of our knowledge it is still open 
for graphs of diameter at most three (see \cite{Chvatal2018} for 
a recent survey).

\subsection*{Our contribution}
Here we prove that tournaments satisfies Chen and Chvátal's conjecture. Surprinsingly, the proof we found is much more involved than that for complete graphs. We also prove this conjecture for orientations of complete bipartite graphs
with oriented diameter at most three. 

Let $D$ be a strongly connected graph. 
Let $L(D)$ be the set of all lines defined by distinct pairs of vertices of $D$.
In the next property we present a criteria that allows us to focus our analysis in 
digraphs with minimum in and out-degree at least two.

\begin{lemma}\label{l:degrees}
Let $D$ be a strongly connected digraph with minimum in-degree or minimum out-degree at most one. Then $V\in L(D)$.
\end{lemma}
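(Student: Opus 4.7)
The plan is a straightforward case analysis based on whether the degree bound is on the in-side or the out-side. Since $D$ is strongly connected, every in-degree and out-degree is at least one, so the hypothesis really means some vertex has in-degree exactly one or out-degree exactly one. In each case I will exhibit an explicit pair $(x,y)$ whose line $\ovd{xy}$ contains every vertex of $D$.

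First I would handle the out-degree case. Let $v$ be a vertex with a unique out-neighbour $u$, and consider $\ovd{vu}$. The edge $v\to u$ is itself a shortest dipath from $v$ to $u$, so both endpoints lie in $[v,u]$ and hence in $\ovd{vu}$. For any other vertex $z$, strong connectivity guarantees a shortest dipath from $v$ to $z$; this dipath must leave $v$ through its only out-neighbour $u$, so $u\in[v,z]$. By the disjunctive clause in the definition of $\ovd{vu}$ (namely $u\in[v,z]$), this places $z\in\ovd{vu}$. Hence $\ovd{vu}=V$.

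The in-degree case is dual. If $v$ has a unique in-neighbour $u$, take the line $\ovd{uv}$. Again $u,v\in[u,v]\subseteq\ovd{uv}$. For any other vertex $z$, any shortest dipath from $z$ to $v$ must terminate with the arc $u\to v$, so $u\in[z,v]$; by the clause $u\in[z,v]$ in the definition of $\ovd{uv}$, we get $z\in\ovd{uv}$, and therefore $\ovd{uv}=V$.

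There is essentially no obstacle here: the only thing to be careful about is that the witness line is oriented in the right direction (from the low-out-degree vertex \emph{to} its unique out-neighbour, and \emph{from} the unique in-neighbour to the low-in-degree vertex), so that the forced arc lies at the correct end of every shortest dipath. Strong connectivity is used exactly once in each case, to guarantee that the relevant shortest dipath from (or to) $v$ exists.
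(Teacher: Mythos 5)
Your proof is correct and follows essentially the same argument as the paper: pick the arc incident to the degree-one vertex, note the arc itself is a shortest dipath so both endpoints lie on the line, and use strong connectivity plus the forced arc to show every other vertex satisfies the appropriate betweenness clause. The paper only writes out the in-degree case and declares the other analogous, whereas you spell out both orientations explicitly, but the content is identical.
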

\begin{proof}  We only consider the case where $D$ has a vertex $y$ with in-degree one; the case when $y$ has out-degree at one is analogous.

Let $xy\in A$. Since $(x,y)$ is a dipath from $x$ to $y$ of length one, it is the shortest $(x,y)$-dipath. So, $x,y\in \ovd{xy}$.
 For $z\in V$, $z\neq y$, there is a dipath from $z$ to $y$ and, since the  in-degree of $y$ is one, this dipath ends with the arc $(x,y)$. 
 Hence, $[zxy]$ and then $z\in \ovd{xy}$. This proves that $V\in L(D)$.
 \end{proof}

 The proof of Lemma \ref{l:degrees} shows that if $y$ has in-degree
 one then for $xy\in A$, the line $\ovd{xy}$ is universal.
 Similarly, if $x$ has out-degree one and $xy\in A$, then the line
 $\ovd{xy}$ is universal as well. The converse in not true. 
 By instance, take a two dicycles $xyz$ and $x'y'z'$ connected
 by the arcs $xy',x'y,yz',y'z,zx',z'x$. Then 
 each vertex has in-degree and out-degree two and
 each arc defines a universal line.

The following corollary is an immediate consequence of Lemma \ref{l:degrees}.

\begin{corollary}\label{c:lessthanfour}
 Let $D=(V,A)$ be a strongly connected orientation of a graph $G$ with minimum degree at most three.
 Then, $V\in L(D)$.
 \end{corollary}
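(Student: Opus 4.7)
The plan is to derive this corollary immediately from Lemma \ref{l:degrees} by translating the undirected degree hypothesis on $G$ into the directed degree hypothesis on $D$. First I would pick a vertex $v\in V$ realizing the minimum degree in $G$, so that $\deg_G(v)\le 3$. Since $D$ is an orientation of $G$, each edge of $G$ incident to $v$ becomes either an arc into $v$ or an arc out of $v$ in $D$, and therefore $d^-_D(v)+d^+_D(v)=\deg_G(v)\le 3$. Since $D$ is strongly connected (and we may assume $|V|\ge 2$, otherwise the statement is vacuous), every vertex, and in particular $v$, has $d^-_D(v)\ge 1$ and $d^+_D(v)\ge 1$.

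Combining $d^-_D(v)+d^+_D(v)\le 3$ with $d^-_D(v),d^+_D(v)\ge 1$ forces $\min\{d^-_D(v),d^+_D(v)\}\le 1$, the only possibilities being the splits $(1,1)$, $(1,2)$ and $(2,1)$. Hence $D$ has a vertex of in-degree at most one or a vertex of out-degree at most one, which is exactly the hypothesis of Lemma \ref{l:degrees}, and applying that lemma yields $V\in L(D)$. There is no real obstacle in this argument: all the work is already done in Lemma \ref{l:degrees}, and the role of the corollary is simply to record that for orientations of graphs with small minimum degree, the directed degree condition needed by the lemma is automatic.
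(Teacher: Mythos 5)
Your argument is correct and is essentially the paper's own proof: pick a vertex $v$ with $d_G(v)\le 3$, observe $d^+_D(v)+d^-_D(v)=d_G(v)$, conclude that $d^+_D(v)\le 1$ or $d^-_D(v)\le 1$, and apply Lemma \ref{l:degrees}. The detour through strong connectivity to enumerate the splits $(1,1),(1,2),(2,1)$ is harmless but unnecessary, since two nonnegative integers summing to at most three already force one of them to be at most one.
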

\begin{proof} Let $v$ a vertex of $G$ with $d_G(v)\leq 3$. As $d^+_D(v)+d^-_D(v)=d_G(v)$ we get 
that $d^+(v)\leq 1$ or $d^-(v)\leq 1$ and from Lemma \ref{l:degrees} we obtain the conclusion.
\end{proof}

\section{Lines defined by an arc}

For a strongly connected digraph $D=(V,A)$ and two vertices $u,v\in V$,
we denote by $d(u,v)$ the length of a shortest dipath (directed path) 
from $u$ to $v$. Even though $d(\cdot,\cdot)$ is not a distance, 
since $d(u,v)$ and $d(v,u)$ may differ, it satisfies $d(u,v)=0$ if and only if $u=v$, and the triangle inequality $d(u,v)\leq d(u,w)+d(w,v)$ for all $u,v,w\in V$. 

In this section we study some properties of lines defined
by arcs, that is lines defined by two vertices
$x$ and $y$ such that $xy\in A$. 
We first notice that if $uv\in A$, then $x\in \ovd{uv}$ 
if and only if $u\in [x,v]$ or $v\in [u,x]$.
Hence, if $x\in \ovd{uv}$, then $d(x,u)<d(x,v)$ or $d(v,x)<d(u,x)$.

\begin{lemma}\label{l:orientedbasic}
Let $D=(V,A)$ an oriented graph.
 Let $uv\in A$ and $x\in V\setminus\{u,v\}$. 
 \begin{enumerate}
  \item[] (I) If $vx,xu\in A$, then $x\in \ovd{uv}$.
  \item[] (O) If $d(x,v)\leq d(x,u)$ and $d(u,x)\leq d(v,x)$, then 
$x\notin \ovd{uv}$. Hence, if $xv,ux\in A$, then $x\notin \ovd{uv}$.
  \item[] (R) If $ux,vx\in A$, then  $x\in \ovd{uv}$ implies that $d(x,v)\geq 3$.
  \item[] (L) If $xu,xv\in A$, then  $x\in \ovd{uv}$ implies that $d(u,x)\geq 3$.
 \end{enumerate}
\end{lemma}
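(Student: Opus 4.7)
The plan is to unpack the definition of $\ovd{uv}$ and analyze, case by case, which of the three betweenness conditions $z\in[u,v]$, $u\in[z,v]$, $v\in[u,z]$ can hold for $z=x$. A preliminary observation that I would use throughout is that since $uv\in A$ we have $d(u,v)=1$, so $x\in[u,v]$ would force $d(u,x)+d(x,v)=1$, i.e. $x\in\{u,v\}$, contradiction. Thus in every part one need only examine the two remaining conditions. I will also use repeatedly that $D$ is oriented, so $pq\in A$ implies $qp\notin A$ and hence $d(q,p)\geq 2$.

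For part (I) the hypothesis $xu\in A$ gives $d(x,u)=1$, while $xv\notin A$ (since $vx\in A$ and $D$ is oriented) combined with the path $x\to u\to v$ gives $d(x,v)=2$. Therefore $d(x,v)=d(x,u)+d(u,v)$, meaning $u\in[x,v]$, so $x\in\ovd{uv}$. Part (O) is the straightforward contrapositive: the assumption $d(x,v)\leq d(x,u)$ rules out $u\in[x,v]$ (which would require $d(x,v)=d(x,u)+1$), and $d(u,x)\leq d(v,x)$ rules out $v\in[u,x]$ symmetrically; the remaining case $x\in[u,v]$ was already excluded. The second sentence of (O) follows by noting that $xv,ux\in A$ give $d(x,v)=d(u,x)=1$, while $d(x,u),d(v,x)\geq 1$, so both distance inequalities hold trivially.

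Parts (R) and (L) are symmetric, so I would carry out (R) in detail and then invoke the analogous argument for (L). Under the hypothesis of (R) we have $d(u,x)=d(v,x)=1$, and again $x\in[u,v]$ is impossible. Next, $v\in[u,x]$ would require $d(u,x)=d(u,v)+d(v,x)=2$, contradicting $d(u,x)=1$. Hence the only surviving possibility is $u\in[x,v]$, giving $d(x,v)=d(x,u)+1$. Finally, since $ux\in A$ and $D$ is oriented we get $xu\notin A$, so $d(x,u)\geq 2$, and therefore $d(x,v)\geq 3$. For (L) the dual calculation rules out $x\in[u,v]$ and $u\in[x,v]$, forces $v\in[u,x]$, hence $d(u,x)=1+d(v,x)$, and then $d(v,x)\geq 2$ (because $xv\in A$ excludes $vx\in A$) yields $d(u,x)\geq 3$.

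There is no real obstacle here: every part is a short case analysis on the three clauses in the definition of a line, and the only non-trivial ingredient is repeatedly pushing the fact that $D$ is oriented to upgrade $pq\in A$ to $d(q,p)\geq 2$. The main thing to be careful about is keeping the direction of each arc straight when translating an arc hypothesis into a distance equation, and remembering that $x\in[u,v]$ is automatically excluded whenever $uv\in A$ and $x\notin\{u,v\}$.
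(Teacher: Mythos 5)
Your proposal is correct and follows essentially the same route as the paper: reduce membership in $\ovd{uv}$ to the two conditions $u\in[x,v]$ and $v\in[u,x]$ (since $uv\in A$ excludes $x\in[u,v]$), then translate each arc hypothesis into distance equations, using that $D$ is oriented to upgrade $pq\in A$ to $d(q,p)\geq 2$. If anything, your treatment of part (O) is slightly more explicit than the paper's, which only spells out the arc case.
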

\begin{proof}
 In the first case, since $vx\in A$ and $D$ is an oriented graph 
 we get that $d(x,v)\geq 2$. But since $xuv$ is a dipath in $D$ we
 get that it is a shortest path from $x$ to $v$ containing $u$. Hence,
 $x\in \ovd{uv}$.
 
 In the second situation, $ux\in A$ implies that $v\notin [u,x]$.
 Similarly, $xv\in A$ implies that $u\notin [x,v]$ does not hold. Then, $x\notin \ovd{uv}$.
 
 In the third case, as $x\in \ovd{uv}$ either $u\in [x,v]$ or $v\in [u,x]$.
 But, as before, $ux\in A$ implies that $v\notin [u,x]$. Hence,
 $u\in [x,v]$ holds. Since $D$ is an oriented digraph and $ux\in A$ we have
 that $d(x,u)\geq 2$ and then we get the conclusion $d(x,v)\geq 3$.
 
 The last case is similar to the previous one. Now, the only possibility
 for $x$ to be in $\ovd{uv}$ is that $v\in [u,x]$ holds. Since $D$ 
 is an oriented digraph, under the assumption
 $xv\in A$ we get that $d(v,x)\geq 2$ and then $d(u,x)\geq 3$.
 \end{proof}

We shall use extensively the properties proved in Lemma \ref{l:orientedbasic}.
To ease the presentation we shall refer to them just as cases (I), (O), (R) or (L).

Let exemplify their use in the digraph $T_5=(V,A)$ in Figure \ref{f:t5}. 
It is a tournament on five vertices. 

Hence, given $uv\in A$ and $w\neq u,v$, 
exactly one of the hypothesis of the cases (I), (O), (R) or (L) holds.
Moreover, since all the distance are at most two,
case (I) holds if and only if $w\in \ovd{uv}$.
It can be seen easily that the vertices appearing to the right
of line $\ovd{uv}$ in the middle table of Figure \ref{f:t5} do satisfy case (I).

By case (O) we have $x'\notin \ovd{ax}$, $y'\notin \ovd{ya}$ and
$x'\notin \ovd{xy}$.
By case (R) we have $x\notin \ovd{ax'}$, $x'\notin \ovd{y'a}$ and
$y'\notin \ovd{xy}$, $y\notin \ovd{x'x}$.
By case (L) we have $y'\notin \ovd{ax'}$, $y\notin \ovd{y'a}$ and
$a\notin \ovd{x'x}$.

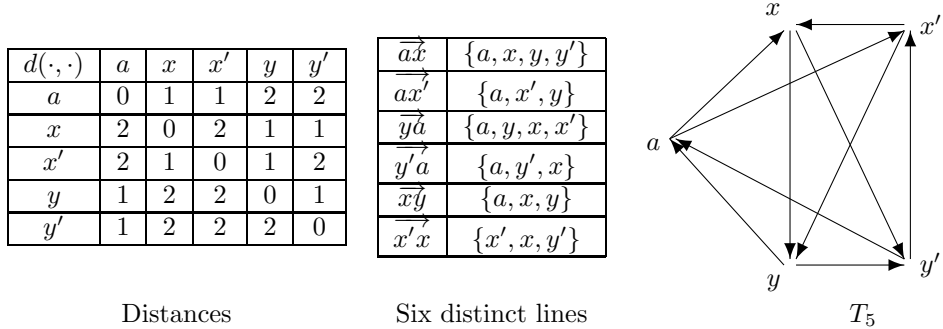
\begin{figure}[ht]
\begin{tabular}{ccc}
\begin{tabular}{|c|c|c|c|c|c|}
\hline
  $d(\cdot,\cdot)$  & $a$ & $x$ & $x'$ & $y$ & $y'$\\ \hline
$a$ & 0 & 1 & 1 & 2 & 2 \\ \hline 
$x$ & 2  &0 & 2 & 1& 1\\ \hline
$x'$& 2  & 1& 0 & 1& 2\\ \hline
$y$ & 1  & 2& 2 & 0 & 1\\ \hline
$y'$& 1  & 2& 2 & 2  & 0\\ \hline
\end{tabular}
&
\begin{tabular}{|c|c|}
\hline
$\ovd{ax}$ & $\{a,x,y,y'\}$ \\ \hline
$\ovd{ax'}$ & $\{a,x',y\}$ \\ \hline
$\ovd{ya}$ &  $\{a,y,x,x'\}$\\ \hline
$\ovd{y'a}$ & $\{a,y',x\}$ \\ \hline 
$\ovd{xy}$ & $\{a,x,y\}$ \\ \hline
$\ovd{x'x}$ & $\{x',x,y'\}$ \\ \hline
\end{tabular}
&
\begin{minipage}{6cm}
\begin{tikzpicture}[scale=0.8,
 decoration={markings, mark=at position 1 with {\arrow[scale=1.5, black]{latex}};}
]
\draw[postaction={decorate}] (0,2.1) -- (1.9,3.9);
\draw[postaction={decorate}] (0,2.1) -- (3.9,3.9);
\draw[postaction={decorate}] (1.9,0) -- (0,2.1);
\draw[postaction={decorate}] (3.9,0.1) -- (0.1,2.1);

\draw[postaction={decorate}] (3.9,4)--(2.1,4);
\draw[postaction={decorate}] (2,3.9)--(2,0.1);
\draw[postaction={decorate}] (2.1,0)--(3.9,0);
\draw[postaction={decorate}] (4,0.1)--(4,3.9);

\draw[postaction={decorate}] (2.1,3.9)--(3.9,0.1);
\draw[postaction={decorate}] (3.9,3.9)--(2.1,0.1);

\node[left] at (0,2) {$a$};
\node[above left] at (2,4) {$x$};
\node[right] at (4,4) {$x'$};
\node[below left] at (2,0) {$y$};
\node[right] at (4,0) {$y'$};

\end{tikzpicture}
\end{minipage}
\\
Distances & Six distinct lines & $T_5$
\end{tabular}
\caption{Use of Lemma \ref{l:orientedbasic}\label{f:t5}.
}

\end{figure}

\begin{lemma}\label{l:lessthansix} 
Every strongly connected oriented graph $D$ with at most five vertices satisfies $V\in L(D)$ or $|L(D)|\geq |D|$.
\end{lemma}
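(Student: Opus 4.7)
The plan is to reduce to a single extremal case. If $|V|\leq 4$, the underlying graph of $D$ has maximum degree at most $|V|-1\leq 3$, so Corollary \ref{c:lessthanfour} immediately yields $V\in L(D)$; the sub-cases $|V|\leq 2$ are vacuous for a strongly connected oriented graph, and $|V|=3$ forces $D$ to be a directed triangle, which is handled by the same corollary. So the only substantive case is $|V|=5$.

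For $|V|=5$, Corollary \ref{c:lessthanfour} lets me assume that the underlying graph of $D$ has minimum degree at least $4$; in other words, every pair of distinct vertices is joined by an arc and $D$ is a tournament on five vertices. At each vertex the in-degree and out-degree sum to $4$, and by Lemma \ref{l:degrees} if either of them equals $1$ then already $V\in L(D)$. I may therefore assume that every vertex has both in-degree and out-degree equal to $2$, i.e.\ $D$ is a $2$-regular tournament on five vertices.

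The remaining task, which I expect to be the main obstacle, is to exhibit at least five distinct lines in any such $D$. The cleanest route is to invoke the classical fact that, up to isomorphism, there is a unique $2$-regular tournament on five vertices, namely the Paley tournament $T_5$ depicted in Figure \ref{f:t5}; the six distinct lines already tabulated there (each verified via the four cases of Lemma \ref{l:orientedbasic}) satisfy $|L(D)|\geq 6\geq |V|$, completing the proof. If a self-contained argument avoiding the classification is preferred, one can instead fix an arbitrary arc $uv$ in $D$, use $2$-regularity to pin down up to symmetry the adjacencies of the remaining three vertices, and rerun the same case analysis (I), (O), (R), (L) in this abstract labelling; the only delicate point in either approach is checking that the six lines so produced are pairwise distinct, which is immediate from comparing the explicit vertex sets (they do not even all have the same cardinality).
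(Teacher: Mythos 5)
Your proof is correct and follows essentially the same route as the paper: reduce to $|V|=5$ via Corollary \ref{c:lessthanfour}, use Lemma \ref{l:degrees} to force all in- and out-degrees to equal $2$, identify the unique such tournament as $T_5$, and cite the six distinct lines computed in Figure \ref{f:t5}. Your version merely spells out a few steps the paper leaves implicit (e.g.\ that a $5$-vertex oriented graph with all degrees $d^{+}=d^{-}=2$ must be a tournament).
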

\begin{proof}
 From previous discussion and Corollary \ref{c:lessthanfour}
 we only need to consider strongly connected oriented graph with five vertices.
 From Lemma \ref{l:degrees} we can assume that 
 for each $v\in V$, $d^+(v),d^-(v)\geq 2$ and then, since
 $d^+(v)+d^-(v)\leq 4$ that $d^+(v),d^-(v)=2$. 
 It is easy to see that up to isomorphism $T_5$ is the unique
 oriented graph with this property. And, we have already shown 
 that $T_5$ has at least six distinct lines.
\end{proof}

\section{Tournaments}

To ease the presentation we use the following notation
for $a\in V$, 
$$a^+=\{x: ax\in A\} \textrm{ and }a^-=\{y: ya\in A\}.$$

For $a\in V$, $B\subseteq a^+$  and $C\subseteq a^-$ we define
the sets $(a,B)$ and $(C,a)$ as follows.
$$(a,B)=\{\ovd{az}: z\in B\} \textrm{ and }(C,a)=\{\ovd{za}: z\in C\}.$$

\begin{lemma}\label{l:sameextreme}
Let $D=(V,A)$ be a tournament.
 Let $a\in V$, $B\subseteq a^+$ and $C\subseteq a^-$.
 Then $|(a,B)|=|B|$ and $|(C,a)|=|C|$.
\end{lemma}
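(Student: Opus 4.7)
The plan is to show that the map $z \mapsto \ovd{az}$ from $a^+$ to $(a,a^+)$ is injective; the claim $|(a,B)|=|B|$ for $B\subseteq a^+$ is then immediate, and $|(C,a)|=|C|$ will follow by a symmetric argument on in-neighbors.

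Fix two distinct vertices $z,z' \in a^+$. Since $D$ is a tournament, either $zz' \in A$ or $z'z \in A$; by swapping the roles of $z$ and $z'$ if necessary, I may assume $zz' \in A$. I claim that $z$ itself distinguishes the two lines. First, $z \in [a,z]$ trivially, so $z \in \ovd{az}$. Second, I would apply case (O) of Lemma \ref{l:orientedbasic} with $u=a$, $v=z'$, $x=z$: the hypotheses $ux = az \in A$ (since $z \in a^+$) and $xv = zz' \in A$ (by the WLOG assumption) hold, so case (O) yields $z \notin \ovd{az'}$. Therefore $\ovd{az} \neq \ovd{az'}$, and the map is injective on $a^+$.

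For the dual statement $|(C,a)|=|C|$, I would run the same argument on two distinct in-neighbors $y,y' \in a^-$. Assuming WLOG $yy' \in A$, case (O) with $u=y$, $v=a$, $x=y'$ gives $y' \notin \ovd{ya}$ (since $xv = y'a \in A$ and $ux = yy' \in A$), while $y' \in \ovd{y'a}$ trivially; hence $\ovd{ya} \neq \ovd{y'a}$. No step is a genuine obstacle here: the entire proof reduces to a single invocation of case (O) together with the completeness of the tournament, which guarantees an arc between any two distinct vertices lying in $a^+$ (or in $a^-$). The reason case (O) is the right tool is that when both $ux$ and $xv$ are arcs, $x$ can never lie on a shortest $(u,v)$-dipath nor be an endpoint extending it, which is exactly the obstruction we need to detect between two lines sharing the endpoint $a$.
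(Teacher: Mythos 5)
Your proof is correct and is essentially identical to the paper's: both reduce the statement to injectivity of $z\mapsto \ovd{az}$ (resp.\ $z\mapsto \ovd{za}$), pick the arc between two distinct neighbours guaranteed by the tournament property, and apply case (O) of Lemma \ref{l:orientedbasic} to exclude one of the two vertices from the other's line. The only difference is that you spell out the trivial containments $z\in\ovd{az}$ and $y'\in\ovd{y'a}$, which the paper leaves implicit.
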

\begin{proof}
Since $D$ is a tournament, for $z,z'\in B$
we can assume that $zz'\in A$. 
Then, case (O) implies that $z\notin \ovd{az'}$.
Similarly, for $z,z'\in C$
we can assume that $zz'\in A$. 
Then, case (O) implies that $z'\notin \ovd{za}$.
\end{proof}

Our main theorem is the following.
\begin{theorem}\label{t:mainT}
 Any strongly connected tournament $D$ has at least $|D|$ distinct lines or a universal line.
\end{theorem}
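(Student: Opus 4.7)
\medskip

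\noindent\textbf{Proof plan for Theorem \ref{t:mainT}.}

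The plan is to fix a single vertex $a$ of $D$ and exhibit at least $|D|=n$ distinct lines, almost all of them ``through'' $a$. First I would run the reductions that are already in place. Using Lemma \ref{l:lessthansix} we may assume $n\geq 6$, and using Lemma \ref{l:degrees} we may assume every vertex has $d^+\geq 2$ and $d^-\geq 2$, since otherwise $D$ already has a universal line and there is nothing to prove. In particular, for any $a\in V$ we have $|a^+|\geq 2$, $|a^-|\geq 2$ and $|a^+|+|a^-|=n-1$.

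Next, fix any vertex $a\in V$ and consider the two families $(a,a^+)$ and $(a^-,a)$. By Lemma \ref{l:sameextreme} each of these families has no internal repetitions, so together they contribute at most $|a^+|+|a^-|=n-1$ distinct lines. The body of the proof then splits into two tasks: (A) quantify how many \emph{cross-coincidences} $\ovd{ab}=\ovd{ca}$, with $b\in a^+$ and $c\in a^-$, can occur; and (B) for each coincidence that costs us a line, manufacture a substitute line, and in all cases produce at least one extra line not yet listed, bringing the total to $n$. Candidates for extra lines are of the form $\ovd{bc}$ or $\ovd{cb}$ with $b\in a^+$, $c\in a^-$ (the arc between them exists since $D$ is a tournament), or lines inside $a^+$ or inside $a^-$; their distinctness from the lines through $a$ should be certified by cases (I), (O), (R), (L) of Lemma \ref{l:orientedbasic} together with distance considerations at $a$.

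The main obstacle, and the reason the authors warn that the proof is much more involved than for complete graphs, will be the cross-coincidence analysis in (A). A single equality $\ovd{ab}=\ovd{ca}$ is a very strong constraint: it forces every vertex of $D$ to have a prescribed position with respect to both shortest $(a,b)$ and shortest $(c,a)$ dipaths, which in turn pins down many arcs incident to $a^+\cup a^-$. I expect the strategy to be: show that each cross-coincidence rigidifies the local structure of $D$ around $a$ enough to guarantee one new line $\ovd{xy}$ not already in $(a,a^+)\cup(a^-,a)$, and that these new lines themselves are pairwise distinct (again via Lemma \ref{l:orientedbasic}).

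If the case analysis becomes intractable at an arbitrary $a$, a natural specialization is to choose $a$ extremally, for instance minimizing $|a^+|$, or to choose two consecutive vertices on a Hamiltonian dicycle (guaranteed by Camion's theorem since $D$ is strongly connected). Such a choice restricts the configurations that have to be inspected and, I expect, lets one conclude either that the $n-1$ lines through $a$ are already all distinct and a single extra line suffices, or that a forced structural pattern produces the missing lines one for one.
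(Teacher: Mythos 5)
Your plan correctly executes only the easy half of the argument. The reductions via Lemma \ref{l:lessthansix} and Lemma \ref{l:degrees}, the choice of a vertex $a$, the observation that $(a,a^+)\cup(a^-,a)$ gives up to $n-1$ lines by Lemma \ref{l:sameextreme}, and the production of one extra line not through $a$ when there are no cross-coincidences --- all of this matches the paper (its Lemma \ref{l:norepeatedlines}). But the entire difficulty of the theorem lives in your tasks (A) and (B), and there you offer only intentions (``I expect the strategy to be\ldots'', ``a natural specialization is\ldots''), not an argument. Worse, the specific shape you propose --- bound the number of coincidences $\ovd{ab}=\ovd{ca}$ and manufacture one substitute line per coincidence --- is not what works and there is no a priori bound: the intersection $(a,a^+)\cap(a^-,a)$ could in principle have size $\Theta(n)$, so you would need to produce linearly many new lines, pairwise distinct and distinct from all lines through $a$, with no mechanism given for doing so.

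What the paper actually does is quite different in structure. It fixes a \emph{single} coincidence $\ovd{ax}=\ovd{ya}$, derives from it rigid local structure (Lemma \ref{l:edgexy} gives $xy\in A$; the coincidence also forces $a^+\cap y^-\setminus\{x\}\subseteq x^+$ and $a^-\cap x^+\setminus\{y\}\subseteq y^-$), and uses that structure to build two whole new families of lines anchored at $x$ and at $y$, namely $X=(a^-\cap x^-,x)\cup(x,a^+\cap y^-\setminus\{x\})$ and $Y=(y,a^+\cap y^+)\cup(a^-\cap x^+\setminus\{y\},y)$, shown pairwise disjoint from each other and from $Z=(a,a^+)\cup(a^-,a)$ in Lemma \ref{l:XYZdisjoint}. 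The coincidence bound that is actually needed is local and delicate (Lemma \ref{l:uniquerepetition}: at most one repetition between $(a,a^+)$ and $(a^-\cap x^+,a)$, and symmetrically), and the final count runs over the partition $V=\{a\}\cup(a^+\cap y^-)\cup(a^+\cap y^+)\cup(a^-\cap x^+)\cup(a^-\cap x^-)$, with a separate structural lemma (Lemma \ref{l:structure}) to rescue the boundary case $\ovd{xy}\in Z$. None of these ingredients is present or derivable from your sketch, and your fallback suggestions (extremal choice of $a$, consecutive vertices on a Hamiltonian dicycle) play no role in the actual proof. As it stands, the proposal has a genuine gap exactly where the theorem is hard.
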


\subsection*{Simple cases}
From Lemma \ref{l:lessthansix} we can assume that $|V|\geq 6$.
The proof of Theorem \ref{t:mainT} requires yet some additional properties. 
In order to focus on the difficult cases let us consider now the 
situation when for some $a\in V$, $(a,a^+)\cap (a^-,a)=\emptyset$.
Then, from Lemma \ref{l:sameextreme}, $D$ has at least $|a^+|+|a^-|=|V|-1$ distinct lines. 

\begin{lemma}\label{l:norepeatedlines}
 Led $D$ be a strongly connected tournament and $a\in V$ such that 
$(a,a^+)\cap (a^-,a)=\emptyset$. Then $|L(D)|\geq |V|$. 
\end{lemma}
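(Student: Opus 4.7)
By Lemma~\ref{l:sameextreme} combined with the hypothesis, the family $(a,a^+)\cup(a^-,a)$ contributes $|a^+|+|a^-|=|V|-1$ pairwise distinct lines, each containing the vertex $a$. It therefore suffices to produce one more line \emph{avoiding} $a$, since such a line is automatically outside $(a,a^+)\cup(a^-,a)$.

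The plan is to exhibit an arc whose line omits $a$ by appealing to Lemma~\ref{l:orientedbasic} with test vertex $x=a$. The argument splits on the presence of a ``reverse'' arc between the two sides. If there exist $x\in a^-$ and $y\in a^+$ with $xy\in A$, then $xa,ay\in A$ and the ``hence'' part of case~(O) of Lemma~\ref{l:orientedbasic} yields $a\notin\ovd{xy}$, supplying the extra line directly. Otherwise $a^+$ dominates $a^-$: for every $u\in a^+$ and $v\in a^-$ we have $uv\in A$, whence $d(u,a)=2$ via $u\to v\to a$ and $d(a,v)=2$ via $a\to u\to v$.

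In this second scenario we pick an internal arc. If $|a^+|\ge 2$, choose an arc $uv$ inside $a^+$; case~(L) of Lemma~\ref{l:orientedbasic} with $x=a$ says that $a\in\ovd{uv}$ would force $d(u,a)\ge 3$, contradicting $d(u,a)=2$, so $a\notin\ovd{uv}$. If instead $|a^+|=1$, then $|a^-|=|V|-2\ge 4$, so there is an arc $v_1v_2$ inside $a^-$, and case~(R) of Lemma~\ref{l:orientedbasic} with $x=a$ says that $a\in\ovd{v_1v_2}$ would force $d(a,v_2)\ge 3$, contradicting $d(a,v_2)=2$. The symmetric dichotomy covers $|a^-|=1$.

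The conceptual heart is the single observation that a line avoiding $a$ is automatically new; the main obstacle is merely checking that the arc-selection dichotomy is exhaustive and that the distance estimates actually go through, both of which follow cleanly from $|V|\ge 6$, the tournament structure, and strong connectivity.
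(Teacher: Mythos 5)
Your proof is correct and follows the same overall strategy as the paper: count the $|V|-1$ lines of $(a,a^+)\cup(a^-,a)$ through $a$ via Lemma~\ref{l:sameextreme}, then exhibit one line avoiding $a$, splitting on whether some arc goes from $a^-$ to $a^+$ (case~(O)) or $a^+$ dominates $a^-$. The one place you genuinely diverge is the dominating case: the paper invokes Lemma~\ref{l:degrees} to assume every $x\in a^+$ has in-degree at least two and then takes an arc inside $a^+$, whereas you split on $|a^+|$ and, when $|a^+|=1$, find the extra line from an arc inside $a^-$ via case~(R). Your route is actually tighter, since Lemma~\ref{l:degrees} only yields a universal line rather than the count $|L(D)|\geq |V|$ that this lemma asserts, so your self-contained handling of the degenerate subcase avoids a small logical shortcut in the paper's own argument; you also cite case~(L) for the arc inside $a^+$, which is the correct case there (the paper writes~(R)). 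The remark that ``the symmetric dichotomy covers $|a^-|=1$'' is redundant, since your dichotomy on $|a^+|$ is already exhaustive.
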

\begin{proof}
In order to obtain a line not in $(a,a^+)\cup (a^-,a)$
we consider two situations.
If there are $x\in a^+$ and $y\in a^-$ such that $yx\in A$, 
then by case (O), $a\notin \ovd{yx}$. Since any line in $(a,a^+)\cup (a^-,a)$
contains $a$, we conclude that $\ovd{yx}$ is not in $(a,a^+)\cup (a^-,a)$.
Therefore, $|L(D)|\geq |V|$.

Otherwise, we can assume that for every $x\in a^+$ and $y\in a^-$ we have that 
$xy\in A$. In this case, $d(x,a)=2$, for all $x\in a^+$.
From Lemma \ref{l:degrees} we also can assume that 
$d^-(x)\geq 2$. Then, there is $x'\in a^+$, $x'\neq x$ and such that $x'x\in A$. 
Since $d(x,a)=2$, from case (R)  we 
get that $a\notin \ovd{x'x}$. As before, this shows that 
there are at least $|V|$ lines in $D$.
\end{proof}
\subsection*{Repeated lines}

In the rest of this section we assume that for all $a\in V$, 
there is $x\in a^+$ and $y\in a^-$ such that $\ovd{ax}=\ovd{ya}$.

\begin{lemma}\label{l:edgexy}
Let $a$ be a vertex of $D$. 
 If there are $x\in a^+$ and $y\in a^-$ such that $\ovd{ax}=\ovd{ya}$
 then $xy\in A$.
\end{lemma}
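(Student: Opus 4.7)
The plan is a proof by contradiction: assume $yx\in A$ and construct a vertex that lies in $\ovd{ax}$ but not in $\ovd{ya}$, contradicting $\ovd{ax}=\ovd{ya}$. First, I note that $x\neq y$ because $ax,ya\in A$ in the oriented graph $D$. Since $y\in\ovd{ya}=\ovd{ax}$, at least one of the three defining conditions for $\ovd{ax}$ must hold at $y$. The condition $y\in[a,x]$ is ruled out because $ax\in A$ forces $[a,x]=\{a,x\}$. The condition $a\in[y,x]$ would give $d(y,x)=d(y,a)+d(a,x)=2$, contradicting $yx\in A$. Hence $x\in[a,y]$, and so $d(a,y)=1+d(x,y)$; since $yx\in A$ forces $d(x,y)\geq 2$ in the oriented graph, we conclude $d(a,y)\geq 3$.

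Next I select the key distinguishing vertex. Fix a shortest $ay$-path $P$ passing through $x$; since $d(a,x)=1$, the second vertex of $P$ is $x$, and since the length of $P$ is $k=d(a,y)\geq 3$, the third vertex $u$ of $P$ exists and is distinct from $a,x,y$. Because $P$ is shortest, the chord $au$ cannot belong to $A$ (otherwise $a\to u\to\cdots\to y$ would shorten $P$), so in the tournament $D$ we must have $ua\in A$. This yields $d(u,a)=1$ and $d(a,u)=2$, realized by $a\to x\to u$. Thus $x\in[a,u]$, and hence $u\in\ovd{ax}$.

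The final step is to show that $u\notin\ovd{ya}$, contradicting the hypothesis $\ovd{ax}=\ovd{ya}$. I check each of the three defining conditions in turn. The condition $u\in[y,a]$ is impossible because $ya\in A$ forces $[y,a]=\{y,a\}$. The condition $y\in[u,a]$ would require $d(u,a)=d(u,y)+1$, i.e.\ $d(u,y)=0$ and hence $u=y$, which contradicts $u\neq y$. Finally, $a\in[y,u]$ would require $d(y,u)=d(y,a)+d(a,u)=3$, but the arcs $yx$ and $xu$ give a $(y,u)$-dipath of length $2$, so $d(y,u)\leq 2$. All three fail, so $u\notin\ovd{ya}$, the desired contradiction. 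Therefore $yx\notin A$, and since $D$ is a tournament with $x\neq y$, we conclude $xy\in A$.

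The main conceptual step, and the one that required a little searching, is the choice of the witness vertex $u$: taking the third vertex on a shortest $ay$-path through $x$ is exactly what lets both the $\ovd{ax}$-membership argument (via the shortcut property of shortest paths forcing $ua\in A$) and the $\ovd{ya}$-exclusion argument (via the short $y\to x\to u$ path created by the hypothetical arc $yx$) go through cleanly. The earlier structural cases (I), (O), (R), (L) of Lemma~\ref{l:orientedbasic} only give consistency checks here (for instance, case~(L) immediately yields $d(a,y)\geq 3$), but they do not by themselves pinpoint the contradiction; one really must produce a specific witness vertex at distance two along the shortest $ay$-path.
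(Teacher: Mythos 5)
Your proof is correct. It follows essentially the same strategy as the paper's: assume $yx\in A$ for contradiction, extract a betweenness relation from $\ovd{ax}=\ovd{ya}$ (you derive $x\in[a,y]$ from $y\in\ovd{ax}$, while the paper derives $y\in[x,a]$ from $x\in\ovd{ya}$), take the neighbour of $x$ on the resulting shortest path, and use the tournament property to force an arc incident to $a$ that creates a shortcut killing one of the betweenness conditions. The only substantive difference is the separating witness: the paper reuses $y$ itself and shows $y\notin\ovd{ax}$, using its auxiliary vertex $z$ only to establish the bound $d(a,y)\le d(x,y)$, whereas you promote your auxiliary vertex $u$ to the witness and show $u\in\ovd{ax}\setminus\ovd{ya}$, which costs one extra (correctly executed) verification that $x\in[a,u]$.
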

\begin{proof} If $yx\in A$, then $y\in [x,a]$ because $x\in \ovd{ya}$. Then, 
there is $z$ with $xz\in A$ such that $y\in [z,a]$. Hence,
$za\notin A$ and then $d(a,y)\leq d(x,y)$. Thus, 
$x\notin [a,y]$ and $a\notin [y,x]$ which shows the contradiction $y\notin \ovd{ax}$.
\end{proof}

For the proof of Theorem \ref{t:mainT}
we consider a vertex $a$ of $D$, $x\in a^+$ and $y\in a^-$ such that $\ovd{ax}=\ovd{ya}$.
Let $X,Y$ and $Z$ be the following sets of lines. 
$$X:=(a^-\cap x^-,x)\cup (x,a^+\cap y^-\setminus \{x\}),$$ 
$$Y:=(y,a^+\cap y^+)\cup (a^-\cap x^+\setminus \{y\},y)$$
and
$$Z:=(a,a^+)\cup (a^-,a).$$
We shall prove that in this case $X\cup Y\cup Z\cup \{\ovd{xy}\}$
has at least $|V|$ distinct lines (see Figure \ref{f:schemeproof}).


\newcounter{xmm}
\newcounter{ymm}
\newcounter{xdmm}
\newcounter{ydmm}

\newcommand{\elipse}[4]
{\setcounter{xmm}{#1}
\setcounter{ymm}{#2}
\setcounter{xdmm}{#3}
\setcounter{ydmm}{#4}

\draw (\arabic{xmm},\arabic{ymm}) to [out=90,in=180] 
(\arabic{xmm}+\arabic{xdmm},\arabic{ymm}+\arabic{ydmm}) to [out=0,in=90] 
(\arabic{xmm}+2*\arabic{xdmm},\arabic{ymm}) 
to [out=-90,in=0] 
(\arabic{xmm}+\arabic{xdmm},\arabic{ymm}-\arabic{ydmm})
to [out=180,in=-90] 
(\arabic{xmm},\arabic{ymm});
}
\begin{figure}[ht]
\begin{center}
\begin{tikzpicture}[scale=0.5,
decoration={markings, mark=at position 1 with {\arrow[scale=1.5, black]{latex}};}
]
\elipse{2}{8}{8}{2}
\node at (1,9) {$a^+$};
\elipse{3}{8}{3}{1}
\node at (6,9.5) {$a^+\cap y^-$};
\elipse{10}{8}{3}{1}
\node at (13,9.5) {$a^+\cap y^+$};
\elipse{2}{3}{8}{2}
\node at (1,2) {$a^-$};
\elipse{3}{3}{3}{1}
\node at (6,1.5) {$a^-\cap x^+$};
\elipse{10}{3}{3}{1}
\node at (13,1.5) {$a^-\cap x^-$};
\node at (0,5.5) {$a$};
\draw[postaction={decorate}] (0,6) to [out=80,in=190] (2,8);
\draw[postaction={decorate}] (2,3) to [out=-190,in=-80] (0,5);
\node at (4,8) {$x'$};
\node at (7,8) {$x$};
\node at (12,8) {$x''$};
\node at (4,3) {$y'$};
\node at (7,3) {$y$};
\node at (12,3) {$y''$};
\draw[postaction={decorate}] (7.5,3.5) -- (11,6.8);
\draw[postaction={decorate}] (7,6.8) -- (7,3.5);
\draw[postaction={decorate}] (11,4)--(7.3,7.8);
\draw[postaction={decorate}] (6.6,7.8) -- (4,4.1);
\end{tikzpicture}
\caption{A representation of sets $a^+,a^-, a^+\cap y^-,a^+\cap y^+,
a^-\cap x^+$ and $a^-\cap x^-$, and the roles of vertices $x,x',x'',y,y'$ and $y''$.}
\label{f:schemeproof}
\end{center}
\end{figure}
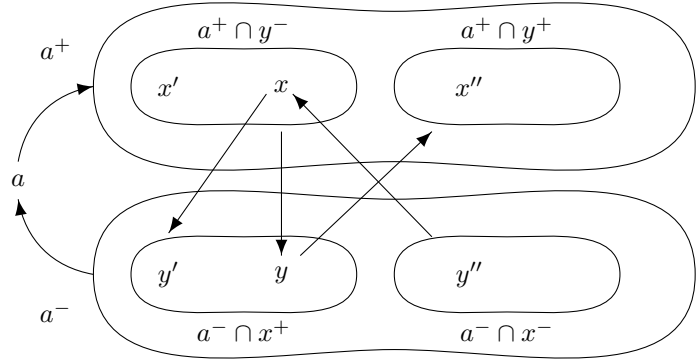

\begin{lemma}\label{l:XYZdisjoint}
 Let $a$ be a vertex of $D$, $x\in a^+$, $y\in a^-$ such that 
$\ovd{ax}=\ovd{ya}$.
Then, $X,Y$ and $Z$ are pairwise disjoint and $\ovd{xy}\notin X\cup Y$.
\end{lemma}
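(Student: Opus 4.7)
The plan is to leverage the hypothesis $\ovd{ax}=\ovd{ya}$ to extract strong structural constraints and then apply Lemma~\ref{l:orientedbasic} case-by-case. By Lemma~\ref{l:edgexy}, $xy\in A$, so the three arcs $ax,xy,ya$ form a directed triangle. Since $D$ is oriented, the three reverse distances must all equal $2$ (routed around the triangle), so $d(x,a)=d(y,x)=d(a,y)=2$. These three equalities will drive everything that follows.

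The central observation is that $a\in\ovd{xy}$ but $a$ lies outside every line of $X\cup Y$; since every line in $Z$ contains $a$, this immediately yields $X\cap Z=Y\cap Z=\emptyset$ and $\ovd{xy}\notin X\cup Y$. To see $a\in\ovd{xy}$ I invoke case~(I) on $ya,ax\in A$. To see $a\notin\ell$ for $\ell\in X\cup Y$ I dispatch the four types: $\ovd{zx}\in X_1$ falls under case~(O) via $z\to a\to x$; $\ovd{xz}\in X_2$ falls under case~(L) via $ax,az\in A$, so $a\in\ovd{xz}$ would force $d(x,a)\geq 3$, contradicting $d(x,a)=2$; $\ovd{yw}\in Y_1$ is case~(O) via $y\to a\to w$; and $\ovd{z'y}\in Y_2$ is case~(R) via $z'a,ya\in A$, which would demand $d(a,y)\geq 3$, contradicting $d(a,y)=2$.

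For $X\cap Y=\emptyset$, note that every line in $X$ contains $x$ and every line in $Y$ contains $y$, so it suffices to exhibit a separating vertex in each pair. Two subcases are immediate: for $\ovd{xz}\in X_2$, case~(R) applied to $xy,zy\in A$ together with $d(y,z)\leq 2$ (via $y\to a\to z$) gives $y\notin\ovd{xz}$, separating it from all of $Y$; dually, for $\ovd{z'y}\in Y_2$, case~(L) applied to $xz',xy\in A$ with $d(z',x)\leq 2$ (via $z'\to a\to x$) gives $x\notin\ovd{z'y}$, separating it from all of $X$.

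The genuine obstacle is $X_1\cap Y_1=\emptyset$, which I expect to require a nested case analysis on the three arc orientations between $\{z,y\}$, $\{x,w\}$ and $\{z,w\}$. If $zy\in A$, case~(R) applied to $\ovd{zx}$ with third vertex $y$ forces $y\notin\ovd{zx}$ (using $d(y,x)=2<3$). If $yz\in A$ and $xw\in A$, case~(L) applied to $\ovd{yw}$ with third vertex $x$ forces $x\notin\ovd{yw}$ (again using $d(y,x)=2$). In the remaining branch $yz,wx\in A$, I test $z$ against $\ovd{yw}$: if $zw\in A$, case~(O) via $y\to z\to w$ excludes $z$; if $wz\in A$, case~(R) via $yz,wz\in A$ would demand $d(z,w)\geq 3$, impossible since $z\to a\to w$ gives $d(z,w)\leq 2$. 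In every branch a single vertex separates the two lines, completing the proof. This final case split is where the bookkeeping concentrates; the remainder is a direct mechanical verification through Lemma~\ref{l:orientedbasic}.
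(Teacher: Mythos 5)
Your proof is correct and follows essentially the same strategy as the paper's: show that every line in $Z$ (and the line $\ovd{xy}$, via case (I)) contains $a$ while no line in $X\cup Y$ does, use that lines in $X$ contain $x$ and lines in $Y$ contain $y$ to handle $X_2$ versus $Y$ and $Y_2$ versus $X$ via cases (R) and (L), and settle $(a^-\cap x^-,x)\cap(y,a^+\cap y^+)$ by examining the arc orientations among the four vertices involved. Your treatment of that last case is a direct case split where the paper argues by contradiction, and your second branch ($yz,xw\in A$) is actually subsumed by your third, but the arguments are equivalent and all applications of Lemma~\ref{l:orientedbasic} and the distance bounds check out.
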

\begin{proof}
Clearly each line in $Z$ contains $a$.
We prove that no line in $X\cup Y$ contains $a$.
From this the last statement follows easily since from 
Lemma \ref{l:edgexy} $xy\in A$ and then $a\in \ovd{xy}$.

By case (O) no line in $(a^-\cap x^-,x) \cup (y,a^+\cap y^+)$  contains $a$.

By case (I), $a^+\cap y^-\subseteq \ovd{ya}$. Since 
$\ovd{ax}=\ovd{ya}$ and case (O) we get that 
$a^+\cap y^-\setminus \{x\}\subseteq x^+$. As 
$d(x',a)=2$, for every $x'\in a^+\cap y^-$ from case (L) 
we get that no line in $(x,a^+\cap y^-\setminus \{x\})$ contains $a$.

Similarly, by case (I), $a^-\cap x^+\subseteq \ovd{ax}$. Since 
$\ovd{ax}=\ovd{ya}$ and case (O) we get that 
$a^-\cap x^+\setminus \{y\}\subseteq y^-$. As 
$d(a,y')=2$, for every $y'\in a^-\cap x^+$ from case (R) 
we get that no line in $(a^-\cap x^+\setminus \{y\},y)$ contains $a$. Therefore, no line in $X\cup Y$ contains $a$.
 
Notice that any line in $X$ contains $x$ and any line in $Y$
contains $y$.

By case (R) no line in $(x,a^+\cap y^-\setminus \{x\})$ contains $y$.
Similarly, by case (L) no line in $(a^-\cap x^+\setminus \{y\},y)$ contains $x$.

Hence, it only remains to show that $(a^-\cap x^-,x)\cap (y,a^+\cap y^+)=\emptyset.$

We have that $d(y,x)=2$. If there are $x''\in a^+\cap y^+$ 
  and $y''\in a^-\cap x^-$ such that $\ovd{yx''}=\ovd{y''x}$,
  then {$y''y\notin A$} since, case (L) and $y\in \ovd{y''x}$
  imply that $d(y,x)\geq 3$. Hence, {$yy''\in A$}.
  As $y''\in \ovd{yx''}$ case (O) implies that $x''y''\in A$,
  but case (R) and $d(y'',x'')=2$ imply the contradiction $y''\notin \ovd{yx''}$.

\end{proof}

\begin{lemma}\label{l:structure}
 Let $a\in V$, $x\in a^+$ and $y\in a^-$ such that 
 $\ovd{ax}=\ovd{ya}$ and $\ovd{xy}\in Z$.
 Then $a^+\cap y^-=\{x\}$ and $a^-\cap x^+=\{y\}$.
\end{lemma}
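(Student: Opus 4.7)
The plan is to first show that $\ovd{xy}\in Z$ pins $\ovd{xy}$ down to the equality $\ovd{xy}=\ovd{ax}=\ovd{ya}$, and then to read off the two identities from that. Throughout I will freely use the two inclusions $a^+\cap y^-\setminus\{x\}\subseteq x^+$ and $a^-\cap x^+\setminus\{y\}\subseteq y^-$ that were already established inside the proof of Lemma \ref{l:XYZdisjoint}, as well as the equality $d(a,y)=2$: it holds because $xy\in A$ by Lemma \ref{l:edgexy} (so $a\to x\to y$ gives $d(a,y)\leq 2$) and $ya\in A$ gives $d(a,y)\geq 2$.

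For the first step, I would suppose $\ovd{xy}=\ovd{aw}$ for some $w\in a^+\setminus\{x\}$. From $y\in\ovd{aw}$ together with $d(a,w)=1$, $d(a,y)=2$ and $y\neq a,w$, the definition (\ref{e:deflinegen}) forces $wy\in A$, hence $w\in a^+\cap y^-\setminus\{x\}\subseteq x^+$ and $d(x,w)=1$. But $x\in\ovd{aw}$ with $w\neq x$ forces $a\in[x,w]$, i.e.\ $d(x,w)=d(x,a)+1\geq 3$, a contradiction. The dual case $\ovd{xy}=\ovd{wa}$ with $w\in a^-\setminus\{y\}$ runs similarly: $y\in\ovd{wa}$ forces $a\in[w,y]$, hence $d(w,y)=d(w,a)+d(a,y)=3$; since $xy\in A$, an arc $wx\in A$ would give a length-two path $w\to x\to y$ violating $d(w,y)=3$, so by the tournament property $xw\in A$. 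Then $w\in a^-\cap x^+\setminus\{y\}\subseteq y^-$, so $wy\in A$ and $d(w,y)=1$, the final contradiction. This leaves $\ovd{xy}=\ovd{ax}$ as the only possibility.

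With $\ovd{xy}=\ovd{ax}$ in hand, I would suppose $x'\in a^+\cap y^-$ with $x'\neq x$. Case (I) applied to $\ovd{ya}$ gives $x'\in\ovd{ya}=\ovd{ax}=\ovd{xy}$. On the other hand, $xx'\in A$ by the first inclusion cited above, so $d(x,x')=1$ and $d(x',x)\geq 2$; together with $x'y\in A$ and $d(x,y)=1$, a direct check of the three defining clauses for membership of $x'$ in the arc-line $\ovd{xy}$ shows that each is violated, contradicting $x'\in\ovd{xy}$. Thus $a^+\cap y^-=\{x\}$, and the identity $a^-\cap x^+=\{y\}$ follows by the completely symmetric argument using the dual inclusion from Lemma \ref{l:XYZdisjoint}.

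The main obstacle will be the dual subcase of the first step, $\ovd{xy}=\ovd{wa}$ with $w\in a^-$. The clean resolution is to first extract $d(w,y)=3$ from $y\in\ovd{wa}$, then use $xy\in A$ together with the tournament property to orient the edge between $w$ and $x$ as $xw\in A$, and finally to invoke the dual inclusion from Lemma \ref{l:XYZdisjoint} so as to exhibit the length-two path $w\to x\to y$ that contradicts $d(w,y)=3$. Everything else reduces to short case analyses via the clauses (I), (O), (R), (L) of Lemma \ref{l:orientedbasic}.
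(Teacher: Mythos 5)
Your proof is correct and follows essentially the same route as the paper's: both arguments first show that $\ovd{xy}\in Z$ forces $\ovd{xy}=\ovd{ax}=\ovd{ya}$ by ruling out $\ovd{aw}$ for $w\in a^+\setminus\{x\}$ and $\ovd{wa}$ for $w\in a^-\setminus\{y\}$; you do this by direct distance computations from the definition of a line, where the paper chains cases (O), (I) and (L)/(R) of Lemma \ref{l:orientedbasic}, but the content is identical. The one genuine difference is that the paper's written proof stops after this identification and leaves the passage from $\ovd{xy}=\ovd{ya}$ to the stated conclusions implicit, whereas your second paragraph supplies exactly that missing step: case (I) places any $x'\in a^+\cap y^-$ into $\ovd{ya}=\ovd{xy}$, while the inclusion $a^+\cap y^-\setminus\{x\}\subseteq x^+$ from the proof of Lemma \ref{l:XYZdisjoint} together with case (O) excludes it, and dually for $a^-\cap x^+$. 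So your version is, if anything, the more complete of the two; no gaps.
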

\begin{proof}
From Lemma \ref{l:edgexy} we know that $xy\in A$.
Let $z\in a^+\setminus \{x\}$. We prove that if 
$x\in \ovd{az}$, then $y\notin \ovd{az}$. 

Since $ax,az\in A$ by case (O) we have that $zx\in A$.
The same case shows that $z\notin \ovd{ax}$. But under 
the assumption $\ovd{ax}=\ovd{ya}$ and again case (O)
we get that $yz\in A$. Case (L) and $d(a,y)=2$ shows
that $y\notin \ovd{az}$. 

In a similar manner we can prove that when 
$z\in a^-\setminus \{y\}$ and $y\in \ovd{za}$, then $x\notin \ovd{za}$. In fact, if $y\in \ovd{za}$, then by case (O) we
get that $yz\in A$ and thus $z\notin \ovd{ya}=\ovd{ax}$.
Hence, $zx\in A$ and, by case (R) and $d(x,a)=2$ we get that 
$x\notin \ovd{za}$.

\end{proof}

In fact with the same idea given in the proof of Lemma \ref{l:structure} it can be proven that when $\ovd{xy}\in Z$, then 
$\ovd{xy}=\ovd{ax}=\ovd{ya}=\{a,x,y\}$.

%
%

 \begin{lemma}\label{l:uniquerepetition}
Let $a$ be a vertex of $D$.
Let $x\in a^+$ and $y\in a^-$ be such that $\ovd{ax}=\ovd{ya}$.
Then, $|(a,a^+) \cap (a^-\cap x^+,a)|, |(a^-,a) \cap (a,a^+\cap y^-)|\leq 1$.
\end{lemma}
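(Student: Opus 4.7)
My plan is to prove the first inequality $|(a,a^+)\cap (a^-\cap x^+,a)|\leq 1$ by contradiction; the second one will then follow by applying the first to the reverse digraph $D^R$, since $\ovd{uv}_{D^R}=\ovd{vu}_D$ and arc-reversal swaps $a^+\leftrightarrow a^-$ and $x\leftrightarrow y$ while preserving the hypothesis $\ovd{ax}=\ovd{ya}$. Note that the pair $(x,y)$ already contributes $\ovd{ax}=\ovd{ya}$ to $(a,a^+)\cap (a^-\cap x^+,a)$ (here $y\in a^-\cap x^+$, since $xy\in A$ by Lemma~\ref{l:edgexy}), so a violation of the inequality produces another pair $u\in a^+$, $v\in a^-\cap x^+$ with $\ovd{au}=\ovd{va}$ distinct from $\ovd{ax}$; Lemma~\ref{l:sameextreme} forces $u\neq x$ and $v\neq y$, and Lemma~\ref{l:edgexy} yields $uv\in A$.

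The first split is on the orientation between $x$ and $u$. If $xu\in A$, case~(O) applied to the arc $au$ with test $x$ gives $x\notin \ovd{au}$, while case~(I) applied to the arc $va$ with test $x$ (using $ax,xv\in A$) gives $x\in \ovd{va}$, contradicting $\ovd{au}=\ovd{va}$. Otherwise $ux\in A$; the same case~(I) argument still gives $x\in \ovd{va}=\ovd{au}$, and now case~(R) on $au$ with test $x$ (hypotheses $ax,ux\in A$) forces $d(x,u)\geq 3$. From here I split on $yu$ versus $uy$. If $yu\in A$, the dipath $x\to y\to u$ (using $xy\in A$ from Lemma~\ref{l:edgexy}) gives $d(x,u)\leq 2$, a contradiction. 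If $uy\in A$, I split finally on $yv$ versus $vy$: in the subcase $yv\in A$, case~(I) on $ax$ with test $v$ (from $xv,va\in A$) puts $v$ in $\ovd{ax}=\ovd{ya}$, while case~(O) on $ya$ with test $v$ (from $va,yv\in A$) excludes $v$ from $\ovd{ya}$; in the subcase $vy\in A$, case~(I) on $au$ with test $y$ (from $uy,ya\in A$) puts $y$ in $\ovd{au}=\ovd{va}$, and then a direct betweenness check using $d(v,a)=d(y,a)=d(v,y)=1$ rules out each of $y\in[v,a]$, $a\in[v,y]$, and $v\in[y,a]$, contradicting $y\in \ovd{va}$.

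The delicate step will be this very last subcase: none of cases~(O), (I), (R), (L) applied to the arc $va$ with test $y$ concludes the argument when $vy\in A$, so one must unfold the definition of $\ovd{va}$ and use the three small distances $d(v,a)=d(y,a)=d(v,y)=1$ to block each betweenness option in turn. Every other branch is closed by a single application of case~(O), (I), or (R), and the symmetric inequality is then immediate via $D^R$.
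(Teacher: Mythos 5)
Your proof is correct: the reduction to a single extra pair $u\in a^+$, $v\in a^-\cap x^+$ with $\ovd{au}=\ovd{va}\neq\ovd{ax}$ is sound, the reversal argument that derives the second inequality from the first is valid, and each of the four branches of the case analysis does close as you claim. The route, however, is genuinely different in execution from the paper's. The paper writes out the symmetric inequality (the pair $x'\in a^+\cap y^-$, $y'\in a^-$) as a single forward deduction chain with no case splits: from $x'y\in A$ it uses case (I) to place $x'$ in $\ovd{ya}=\ovd{ax}$ and $y$ in $\ovd{y'a}$, then repeatedly uses cases (O) and (I) in contrapositive form to \emph{deduce} the orientations $xx'$, $yy'$ and $y'x$, and finishes with the single computation $d(x',x)=2$ against case (R); the other inequality is dismissed as ``similar'', much as you dismiss it via the reversed digraph. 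You instead enumerate the undetermined orientations $xu/ux$, $yu/uy$, $yv/vy$ and refute each branch separately; this is longer but more mechanical and arguably easier to audit. Two small remarks. First, in the final subcase ($vy\in A$) your claim that no case of Lemma~\ref{l:orientedbasic} applies to the arc $va$ with test vertex $y$ is wrong: case (O) applies verbatim, since its hypotheses for that arc and that test vertex are exactly $ya\in A$ and $vy\in A$, and it gives $y\notin\ovd{va}$ at once; your hand check of the three betweenness conditions is correct but unnecessary. Second, the subcase $yv\in A$ is refuted using only $xv,va,yv\in A$, so it is impossible irrespective of the orientations of $xu$ and $yu$ and the tree could be flattened. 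Neither point affects correctness.
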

\begin{proof}
Let us assume that there are $x'\in a^+\cap y^-$ and $y'\in a^-$
such that $\ovd{ax'}=\ovd{y'a}$. 
Notice that it is enough to prove that $x'=x$ since $\ovd{y'a}=\ovd{ya}$ implies
that $y'=y$, for Lemma \ref{l:sameextreme}.

 From Lemma  \ref{l:edgexy} 
 we know that $xy,x'y'\in A$.
 For the sake of contradiction, let us assume that $x'\neq x$.
From the assumption $x'y\in A$ and case (I) we get that 
 $x'\in \ovd{ya}=\ovd{ax}$ and $y\in \ovd{ax'}=\ovd{y'a}$, 
 and by case (O) that $xx'\in A$ and $yy'\in A$.
By case (O), $y'\notin \ovd{ya}=\ovd{ax}$. By case (I), $y'x\in A$.
Then, $d(x',x)=2$ which by case (R) implies
the contradiction $x'\notin \ovd{ax}$. 
 
When there are $x'\in a^+$ and $y'\in a^-\cap x^+$ 
such that $\ovd{ax'}=\ovd{y'a}$, we proceed in a similar way. 
\end{proof}

We now give the proof of Theorem \ref{t:mainT}.

\begin{proof} (of Theorem \ref{t:mainT})
Let $a$ be any vertex of $D$. From Lemma \ref{l:norepeatedlines} 
we can assume that $(a,a^+)\cap (a^-,a)\neq \emptyset$. Let 
$x\in a^+$ and $y\in a^-$ such that $\ovd{ax}=\ovd{ya}$.
From Lemma \ref{l:edgexy} we have that $xy\in A$.

We know from Lemma \ref{l:XYZdisjoint}
that the sets $X,Y$ and $Z$ are pairwise disjoint. Let $W=X\cup Y\cup Z$ and $\overline{W}=L(D)\setminus W$.

Since 
$$(a,a^+)\cup (a^-\cap x^+,a), (a^-,a)\cup (a,a^+\cap y^-)\subseteq Z,$$
$$(a^-\cap x^-,x),(x,a^+\cap y^-\setminus \{x\})\subseteq X$$ and 
$$(y,a^+\cap y^+), (a^-\cap x^+\setminus \{y\},y)\subseteq Y,$$
the cardinality of $W$ is at least the maximum of the following values.
\begin{itemize}
 \item $|(a,a^+)\cup (a^-\cap x^+,a)|+|(a^-\cap x^-,x)|+|(y,a^+\cap y^+)|$.
\item $|(a,a^+)\cup (a^-\cap x^+,a)|+|(a^-\cap x^-,x)|+
  |(a^-\cap x^+\setminus \{y\},y)|$.
\item $|(a^-,a)\cup (a,a^+\cap y^-)|+|(a^-\cap x^-,x)|+|(y,a^+\cap y^+)|$.
\item $|(a^-,a)\cup (a,a^+\cap y^-)|+|(y,a^+\cap y^+)|+|(x,a^+\cap y^-\setminus \{x\})|$
\end{itemize}
  From Lemma \ref{l:uniquerepetition} we have that $|(a,a^+)\cap (a^-\cap x^+,a)|,   |(a^-,a)\cap (a^+\cap y^-,a)|\leq 1$. Hence, $|W|$ is at least the maximum of the following values:
\begin{itemize}
 \item   $|a^+|+|a^-\cap x^+|-1+|a^-\cap x^-|+|a^+\cap y^+|=|V|+|a^+\cap y^+|-2$, 
 \item   $|a^+|+|a^-\cap x^+|-1+|a^-\cap x^-|+|a^-\cap x^+|-1=|V|+|a^-\cap x^+|-3$,
 \item   $|a^-|+|a^+\cap y^-|-1+|a^-\cap x^-|+|a^+\cap y^+|=|V|+|a^-\cap x^-|-2,$ 
 \item   $||a^-|+|a^+\cap y^-|-1+|a^+\cap y^+|+|a^+\cap y^-|-1=|V|+|a^+\cap y^-|-3$.
\end{itemize}

Then, $$|L(D)|\geq |V|+|\overline{W}|+k-2,$$
where 
$$k=\max\{|a^+\cap y^-|-1,|a^-\cap x^+|-1,|a^+\cap y^+|,|a^-\cap x^-|\}.$$
From Lemma \ref{l:lessthansix} we can assume that $|V|\geq 6$.
As $$V=\{a\}\cup (a^+\cap y^-)\cup (a^+\cap y^+)\cup 
(a^-\cap x^+)\cup (a^-\cap x^-),$$
we get that $k\geq 1$. 
If $\overline{W}$ is not empty, then we get the conclusion $|L(D)|\geq |V|$. 
Otherwise, $\ovd{xy}\in W$. From Lemma \ref{l:XYZdisjoint}
we get that $\ovd{xy}\in Z$ and from Lemma \ref{l:structure} we get that $a^+\cap y^-=\{x\}$ and $a^-\cap x^+=\{y\}$. But, since $|V|\geq 6$, in this latter situation $k\geq 2$.

\end{proof}

\section{Orientation of complete bipartite graphs}

Let $B=(V,A)$ be an orientation of diameter at most three 
of a complete bipartite graph with independent sets $X$ and $Y$.
We prove that $|L(B)|\geq |V|$ or $V\in L(B)$.
If $V\notin L(B)$, then from Lemma \ref{l:degrees} we know
that, for each $u\in V$, $|u^+|,|u^-|\geq 2$ and 
then $|X|,|Y|\geq 4$.

Due to the restriction on the diameter we have that 
$d(u,v)=d(v,u)=2$, when $u$ and $v$ belong to the same 
independent set, and that $d(u,v)\in \{1,3\}$ when 
they are in different independent sets.

Hence, if $u$ and $v$ are in $X$, then 
$$\ovd{uv}=\{u,v\}\cup (u^+\cap v^-)\cup (u^-\cap v^+).$$

Though $\ovd{uv}=\ovd{vu}$, these lines still have a nice property: $\ovd{uv}\cap X=\{u,v\}$. Hence, $B$ has at least $\binom{|X|}{2}$
different lines of this type. Since we can assume that $|X|\geq \lceil |V|/2 \rceil$, we get the result for $|V|\geq 9$.

We now focus in the case when $B$ has 8 vertices. 
We prove that lines defined by arcs are different from the $\binom{4}{2}=6$ previous ones, and that there are at least two different such lines.

Let $uv\in A$.  Then $\ovd{uv}\cap u^+=\{v\}$ and $\ovd{uv}\cap v^-=\{u\}$. In fact, if $z\in u^+$, $z\neq v$, then $d(z,u)=3>d(z,v)=2$ which implies that $u\notin [z,v]$. As $v\notin [u,z]$ we get that $z\notin \ovd{uv}$. A similar argument
shows the second statement. 

From them we get the analogous of Lemma \ref{l:sameextreme}: $|(u,u^+)|=|u^+|$ and $|(v^-,v)|=|v^-|$, and the structure of the line $\ovd{uv}$:  
$$\ovd{uv}=\{u,v\}\cup u^-\cup v^+.$$
Moreover, if $u\in X$, then $|\ovd{uv}\cap X|=|\{u\}\cup v^+|\geq 3$ and $|\ovd{uv}\cap Y|=|\{v\}\cup u^-|\geq 3$.

This last property shows that the lines in $(u,u^+)$ are different from lines defined by two vertices in the same independent set. Since $|(u,u^+)|=|u^+|\geq 2$ we conclude
that $B$ has at least $\binom{6}{2}+2=8$ different lines showing the validity of the following result.
\begin{theorem}\label{t:bipdiam3}
 Orientations of diameter at most three of complete bipartite graphs satisfy Chen and Chvátal's conjecture.
\end{theorem}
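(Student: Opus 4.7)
My plan is to reduce immediately to a concrete lower bound on the part sizes, produce many lines from pairs in the larger part, and only then deal with a small exceptional case using arc-lines.

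First I would assume $V\notin L(B)$, so by Lemma \ref{l:degrees} every vertex has in- and out-degree at least two; since all out-neighbors of a vertex in one part lie in the other part, this forces $|X|,|Y|\geq 4$. Next I would use the diameter-three hypothesis to classify pair distances: $d(u,v)=d(v,u)=2$ when $u,v$ lie in the same part, and $d(u,v)\in\{1,3\}$ when $u,v$ lie in different parts. Plugging these into the definition of $\ovd{uv}$ for $u,v$ in the same part $X$ gives
\[
\ovd{uv}=\{u,v\}\cup(u^+\cap v^-)\cup(u^-\cap v^+),
\]
and in particular $\ovd{uv}\cap X=\{u,v\}$, since any other vertex of $X$ is at distance $2$ from both endpoints in either direction. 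This intersection property means that distinct unordered pairs in $X$ yield distinct lines, giving $\binom{|X|}{2}$ lines. By symmetry, after swapping the roles of $X$ and $Y$ if necessary, I can assume $|X|\geq\lceil|V|/2\rceil$; a direct computation shows $\binom{\lceil|V|/2\rceil}{2}\geq|V|$ whenever $|V|\geq 9$, settling that range.

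The remaining case is $|V|=8$, hence $|X|=|Y|=4$, which yields only $\binom{4}{2}=6$ lines from same-part pairs. I would then study lines $\ovd{uv}$ defined by an arc $uv\in A$ with $u\in X$, $v\in Y$. For any $z\in u^+\setminus\{v\}$ one has $z\in Y$ and hence $d(z,u)=3$ while $d(z,v)=2$, so $u\notin[z,v]$; combined with $v\notin[u,z]$ this gives $\ovd{uv}\cap u^+=\{v\}$, and analogously $\ovd{uv}\cap v^-=\{u\}$. This already yields an analogue of Lemma \ref{l:sameextreme}, showing the $|u^+|\geq 2$ lines in $(u,u^+)$ are pairwise distinct. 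Moreover, unpacking which vertices satisfy the definition of $\ovd{uv}$ gives the structure
\[
\ovd{uv}=\{u,v\}\cup u^-\cup v^+,
\]
so $|\ovd{uv}\cap X|=|\{u\}\cup v^+|\geq 3$, distinguishing arc-lines from the six same-part-pair lines (which meet $X$ in exactly two vertices). Combining the six same-part-pair lines with at least two arc-lines gives $|L(B)|\geq 8=|V|$.

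The main obstacle is the $|V|=8$ case: the ``easy'' counting $\binom{|X|}{2}$ alone does not suffice, so I need to identify a disjoint pool of lines. The delicate part is verifying both that arc-lines are structurally different from same-part-pair lines (done by the $|\ovd{uv}\cap X|\geq 3$ argument) and that there are genuinely at least two such arc-lines (done by the $\ovd{uv}\cap u^+=\{v\}$ observation applied to the $\geq 2$ out-neighbors of a fixed vertex $u$). Everything else is bookkeeping driven by the distance dichotomy imposed by the diameter-three assumption.
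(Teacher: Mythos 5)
Your proposal is correct and follows essentially the same route as the paper: the same reduction via Lemma \ref{l:degrees}, the same $\binom{|X|}{2}$ count from same-part pairs settling $|V|\geq 9$, and the same treatment of the $|V|=8$ case via arc-lines distinguished by $|\ovd{uv}\cap X|\geq 3$. No gaps.
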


\section{Conclusion}

We believe that with similar ideas to those developed in this work it can be proved that strongly connected orientations with oriented diameter at most two have either an universal line or have
at least as many lines as vertices.



\end{document}